\numberwithin{equation}{section}
\theoremstyle{plain}
\newtheorem{theorem}{Theorem}[section]
\newtheorem{lemma}[theorem]{Lemma}
\newtheorem{example}[theorem]{Example}
\theoremstyle{definition}
\newtheorem{definition}[theorem]{Definition}
\theoremstyle{remark}
\newtheorem{assumption}[theorem]{\textbf{Assumption}}
\newtheorem{remark}[theorem]{Remark}
\newcommand{\lmt}[2]{\mathop{\lim}_{{#1} \rightarrow {#2}} }
\newcommand{\lmts}[2]{\mathop{\overline{\lim}}_{{#1} \rightarrow {#2}} }
\newcommand{\lmti}[2]{\mathop{\underline{\lim}}_{{#1} \rightarrow {#2}} }
\newcommand{\Ric}{{\rm{Ric}}}
\renewcommand{\H}{{\mathrm{Hess}}}
\newcommand{\mm}{\mathfrak m}
\newcommand{\ms}{(X,\d,\mm)}
\newcommand{\rcdkn}{{\rm RCD}(K, N)}
\newcommand{\rcd}{{\rm RCD}(K, \infty)}
\newcommand{\E}{\mathcal{E}}
\newcommand{\N}{\mathbb{N}}
\newcommand{\R}{\mathbb{R}}
\newcommand{\Lip}{\mathop{\rm Lip}\nolimits}
\renewcommand{\d}{{\mathrm d}}
\newcommand{\restr}[1]{\lower3pt\hbox{$|_{#1}$}}
\newcommand{\nchi}{{\raise.3ex\hbox{$\chi$}}}
\begin{document}

\title[Maz'ya-Shaposhnikova's theorem and rigidity]{\footnotesize  On the asymptotic behaviour of the  fractional Sobolev seminorms in  metric measure spaces:  asymptotic volume ratio,  volume entropy  and rigidity}

\author[B. ~Han]{Bang-Xian Han}
\address{School of   Mathematical Sciences, University of Science and Technology of China (USTC), 230026, Hefei, China}
\email{hanbangxian@ustc.edu.cn}

\author[A.~Pinamonti]{Andrea Pinamonti}
\address{Dipartimento di Matematica\\ Universit\'a di Trento\\
Via Sommarive, 14, 38123 Povo TN}
\email{andrea.pinamonti@unitn.it}

\date{\today}
\bibliographystyle{abbrv}

\begin{abstract}
We study the asymptotic behaviour of suitably defined seminorms in general metric measure spaces. As  particular cases we provide new and shorter proofs of the  Maz'ya-Shaposhnikova's  theorem \cite{MS} 
with general mollifiers as well as of  Ludwig's result \cite{Ludwig14} concerning anisotropic Sobolev seminorms. 
Our result  provides new spaces satisfying an asymptotic formula involving the asymptotic volume ratio (and the volume entropy),   non-trivial examples fit our setting  includes Carnot groups and metric measure spaces with synthetic lower Ricci curvature bound.   Moreover, we prove two  rigidity theorems which are new even in the smooth setting.
\end{abstract}

\maketitle

\section{Introduction}\label{first section}

About 20 years ago,   Bourgain, Brezis and Mironescu \cite{BBM} on one hand and  Maz’ya and Shaposhnikova \cite{MS}  on the other, revealed that fractional $(s,p)$-seminorms  can be used to recover the   $L^p$-norm and the $W^{1, p}$-seminorms when $s\to 0^+$ and $s\to 1^{-}$ respectively.  More precisely,  for any $s\in (0, 1)$, $N\in \N$ and $p\geq 1$, the fractional Sobolev space $W^{s, p}(\R^N)$ is defined as the set of $L^p(\R^N)$ functions with finite seminorm
\[
\|f\|_{W^{s, p}}:=\left (\int_{\R^N} \int_{\R^N} \frac{|f(x)-f(y)|^p}{|x-y|^{N+sp}} \,\d\mathcal L^N (x) \d \mathcal L^N (y) \right)^{\frac 1p}
\]
where $\mathcal L^N$ denotes the Lebesgue measure on $\R^N$. The following  important asymptotic formulas have been proved in \cite{BBM} and \cite{MS} respectively: for any $f\in \cup_{0<s<1} W^{s, p}(\R^N) \cap W^{1, p}(\R^N)$,   it holds
 \begin{equation}\label{BBM:intro}
 \mathop{\lim}_{s \uparrow 1}~ (1-s)\|f\|^p_{W^{s, p}}= {K }  \| \nabla f\|_{L^p(\R^N)}^p  \tag{BBM}
\end{equation}
and
 \begin{equation}\label{MS:intro}
 \mathop{\lim}_{s \downarrow 0} ~s\|f\|^p_{W^{s, p}}= {L }  \|  f\|_{L^p(\R^N)}^p  \tag{MS}
\end{equation}
where $K={K_{p, N} }$ and $L={L_{p, N} }$ are positive constants depending only on $p$ and $N$.
 
Both formulas \eqref{BBM:intro} and \eqref{MS:intro} have been widely studied, cited in hundreds of papers  in the fields of analysis,  probability theory and  geometry,  and generalized to many different situations, see for example \cite{Cap,Cianchi21,HanPinamonti-BBM,Gorny20,Kre,Lam,Ludwig14, MMilman05TAMS, MMilman05JFA, Mal,Ngu,Pin1,Pin2,Ponce} and the references therein. Therefore, we believe it is an interesting problem to find more  non-trivial examples satisfying such asymptotic formulas, and learn more about the structural constants $K$ and $L$ appearing in the formulas.

In the present paper we study \eqref{MS:intro} in the setting of general metric measure space and general mollifiers.
Following a completely different proof than the one proposed in \cite{MS} we prove that \eqref{MS:intro} holds in great generality and that the mollifier $\frac{s}{|x-y|^{n+sp}}$ can be replaced by more general ones (Theorem \ref{th:MS2}). As a consequence we provide a new proof of \cite{MS},  as well as of \cite{Ludwig14} without using Blaschke-Petkantschin formula.  

We also prove a surprising link between the Maz'ya-Shaposhnikova's formula  and  the asymptotic volume ratio of the metric measure space (Lemma \ref{lemma1} and Theorem \ref{th:MS}). Using this very precise and geometric information we provide several interesting examples of spaces satisfying a Maz'ya-Shaposhnikova's type formula.  In addition, we find a new dimension-free asymptotic formula involving volume entropy (Theorem \ref{th:MS1}).
Finally, applying our result to the setting of metric measure spaces with synthetic curvature-dimension condition \`a la Lott-Sturm-Villani,  we  obtain two  sharp estimates on the corresponding structural constants, and prove two rigidity theorems (Theorem \ref{eq1:rigidity1} and Theorem \ref{rigidity2}). To our knowledge,   these results  are new even in the smooth setting.


\vspace{0.5cm} 

\noindent
{\bf Plan  of the paper.} 

In subsection \ref{sect: mainth} we prove our main result, i.e. Theorem \ref{th:MS2} .
In the next subsection \ref{sect:appl}, we apply our main theorem with two specific mollifiers and we prove Lemma \ref{lemma1}, moreover we provide several examples of metric measure spaces where Theorem \ref{th:MS2} and  Lemma \ref{lemma1} hold. 
Finally, in subsection \ref{splitting} we prove two rigidity results Theorem \ref{eq1:rigidity1} and Theorem \ref{rigidity2}.

\section{Main results}
\subsection{General theory}\label{sect: mainth}
In this paper,  a metric measure space  is a triple $\ms$ where $(X, \d)$ is a complete and separable metric space,  $\mm$ is a locally finite non-negative Borel measure on $X$ with full support.

Let $(\rho_n)_{n \in \N}$ be a sequence of mollifiers i.e. each \[\rho_n:(X\times X)\setminus \{(x,y)\in X\times X\ |\ x=y\}\to (0,\infty)\] is measurable and it satisfies the following set of assumptions:
\begin{assumption}\label{assumption2}
\item [1)] There is a sequence of functions $(\tilde\rho_n)_{n\in\N}$ with $\tilde \rho_n:(0,\infty)\to (0,\infty)$ continuous such that
\[
\rho_n (x, y)=\tilde\rho_n \big( \d(x, y)\big)\ \mbox{for all}\ x,y\in X,\ x\neq y.
\]
Moreover, there exists $n_0\in \N$ such that $\tilde\rho_n(r)$ is non-increasing in $r$ for any $n>n_0$,  there exists $r_1>0$ such that $(\tilde\rho_n)_{n\in\N}$ is non-increasing  in $n\in \N$ for any $r\in (r_1,\infty)$, and
\begin{equation}\label{limit}
\lim_{n\to\infty} \tilde\rho_n(r)=0,\quad \forall r\in (r_1,\infty).
\end{equation}

\item [2)] There exists $r_0>0$ such that for any $n,m\in\mathbb{N}$ with $n >m$ the function
\[
(r_0, +\infty) \ni  r \to \frac{\tilde \rho_n(r)}{\tilde \rho_m(r)} 
\]
is non-decreasing.
\item [3)]  There is a structural constant $L\in [0,\infty)$ such that the following limit holds
 \begin{equation}\label{eq1:assumption}
\lmt{\delta}{\infty} \lmt{n}{\infty}\int_{B^c_\delta(x) }  \rho_n(x, y) \,\d \mm(y)=L,~~~\forall x\in X
 \end{equation}
 where $B^c_\delta(x) $ denotes the complement of the open ball centred at $x$ with radius $\delta>0$. 
\end{assumption}
 
 \bigskip

Let $p>1$, $n\in \N$ and $(\rho_n)_{n\in\N}$ be a family of mollifiers. We define the $W(n, p)$ as the set of functions in $L^p(X, \mm)$ with finite (semi)norm $\E_n(\cdot)$,  defined by
\begin{equation*}
\E_n(u):= \left (\int_X \int_X {|u(x)-u(y)|^p} \rho_n(x, y)\,\d\mm(x) \d\mm(y)\right)^{\frac 1 p}.
\end{equation*}
We are now ready to prove a generalized  Maz'ya-Shaposhnikova's asymptotic formula under the Assumption \ref{assumption2}.
\begin{theorem}[Generalized  Maz'ya-Shaposhnikova's formula]\label{th:MS2}
Given $p>1$ and  a metric measure space $\ms$.  Let $(\rho_n)_{n\in \N}$ be a family of mollifiers.
For any $u\in  \cup_{n\in \N} W(n, p)$,  there exists the limit
\begin{equation}\label{eq1:MS}
\mathop{\lim}_{n\to \infty} \E_n^p(u)=2L\|u\|^p_{L^p}
\end{equation}
where $L$ is the constant given  in \eqref{eq1:assumption}.
\end{theorem}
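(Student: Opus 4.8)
The plan is to establish \eqref{eq1:MS} directly as a genuine limit. I would fix some $m\in\N$ with $u\in W(m,p)$ and, for a large radius $\delta>0$, split the double integral according to whether the two points are close or far apart:
\[
\E_n^p(u)=\int_X\int_{B_\delta(x)}|u(x)-u(y)|^p\rho_n(x,y)\,\d\mm(x)\,\d\mm(y)+\int_X\int_{B^c_\delta(x)}|u(x)-u(y)|^p\rho_n(x,y)\,\d\mm(x)\,\d\mm(y)=:N_n(\delta)+F_n(\delta).
\]
I would then show $\lim_{n\to\infty}N_n(\delta)=0$ for each fixed $\delta$, and $\lim_{\delta\to\infty}\lim_{n\to\infty}F_n(\delta)=2L\|u\|_{L^p}^p$, which together give the claim. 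The factor $2L$ should emerge from the far part $F_n(\delta)$: the constant $L$ from Assumption \ref{assumption2}(3), and the factor $2$ from the symmetry $x\leftrightarrow y$.

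For the near part, consider first the annulus $\{r_0<\d(x,y)\le\delta\}$. Here Assumption \ref{assumption2}(2) (monotonicity of $r\mapsto\tilde\rho_n(r)/\tilde\rho_m(r)$) gives the pointwise domination $\rho_n(x,y)\le\frac{\tilde\rho_n(\delta)}{\tilde\rho_m(\delta)}\rho_m(x,y)$, whence this portion of $N_n(\delta)$ is at most $\frac{\tilde\rho_n(\delta)}{\tilde\rho_m(\delta)}\E_m^p(u)$, which tends to $0$ as $n\to\infty$ because $\tilde\rho_n(\delta)\to0$ by \eqref{limit} while $\E_m(u)<\infty$. The complementary core $\{\d(x,y)\le r_0\}$ is treated by the same comparison mechanism, dominating $\rho_n$ by a multiple of $\rho_m$ whose constant vanishes and again invoking $\E_m(u)<\infty$; this is the point where Assumptions \ref{assumption2}(1)--(2) are used jointly. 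Thus $\lim_{n\to\infty}N_n(\delta)=0$ for every $\delta$.

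The far part is the heart of the argument. Given $\eps>0$, I would pick a threshold $\eta>0$ and set $K:=\{|u|>\eta\}$, which has finite measure since $u\in L^p$, and with $\int_{K^c}|u|^p\,\d\mm<\eps$ for $\eta$ suitably small. Then I would split $F_n(\delta)$ according to the membership of $x,y$ in $K$. When both lie in $K$, bounding $\rho_n\le\tilde\rho_n(\delta)$ on $B^c_\delta$ and using $\mm(K)<\infty$ shows this contribution is of order $\tilde\rho_n(\delta)\to0$. When both lie outside $K$, the crude inequality $|u(x)-u(y)|^p\le2^{p-1}(|u(x)|^p+|u(y)|^p)$ together with the uniform bound on $\int_{B^c_\delta(x)}\rho_n$ coming from Assumption \ref{assumption2}(3) bounds the contribution by a constant times $\int_{K^c}|u|^p<\eps$. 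The two mixed terms carry the main part: on $\{x\in K,\ y\notin K\}$ one has $|u(y)|\le\eta$, so the elementary inequalities $(1-\eps')|a|^p-C_{\eps'}|b|^p\le|a-b|^p\le(1+\eps')|a|^p+C_{\eps'}|b|^p$ reduce the integrand to $|u(x)|^p$ up to controllable errors, and \eqref{eq1:assumption} (noting $\int_{K\cap B^c_\delta(x)}\rho_n\to0$, hence $\int_{K^c\cap B^c_\delta(x)}\rho_n\to L$) yields a limit $L\int_K|u|^p\,\d\mm$; the symmetric term $\{x\notin K,\ y\in K\}$ contributes equally, producing the factor $2$.

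Assembling the estimates and letting $n\to\infty$, then $\delta\to\infty$, $\eta\to0$ and $\eps\to0$, the upper and lower elementary inequalities give $\limsup_{n\to\infty}\E_n^p(u)\le2L\|u\|_{L^p}^p$ and $\liminf_{n\to\infty}\E_n^p(u)\ge2L\|u\|_{L^p}^p$, which is \eqref{eq1:MS}. I expect the main obstacle to be precisely the far part: ruling out the ``genuine cross'' contributions where $x,y$ are far apart yet both sit where $u$ is large, and extracting the sharp constant $2L$ rather than the crude $2^pL$ coming from the triangle inequality. The device that resolves this is the finite measure of the level set $K$ combined with the decay $\tilde\rho_n(\delta)\to0$, which confines all the surviving mass to the mixed configurations governed by \eqref{eq1:assumption}. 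A secondary technical point is the uniform-in-$n$ control and passage to the limit of $\int_{B^c_\delta(x)}\rho_n$, for which the monotonicity in $n$ from Assumption \ref{assumption2}(1) and dominated convergence are needed.
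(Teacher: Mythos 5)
Your skeleton (near/far splitting at a radius $\delta$, near part killed by the ratio-monotonicity of Assumption \ref{assumption2}-2), far part carrying $2L\|u\|^p_{L^p}$) matches the paper's, and your treatment of the near part is essentially identical to the paper's estimate of its region $A$. For the far part, however, you take a genuinely different route: you decompose according to the level set $K=\{|u|>\eta\}$, whereas the paper fixes a base point $x_0$ and splits $\{\d(x,y)>R\}$ according to the relative sizes of $\d(x,x_0)$ and $\d(y,x_0)$ (its regions $B$, $B'$, $C$, $C'$). Your \emph{lower} bound does go through: the error terms vanish because $\mm(K)<\infty$ and $\tilde\rho_n(\delta)\to 0$, and Fatou together with the fact that $\lim_n\int_{B^c_\delta(x)}\rho_n\,\d\mm(y)\geq L$ for every $\delta$ yields $\liminf_n \E_n^p(u)\geq 2L\|u\|^p_{L^p}$ from the two mixed terms.

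The gap is in the upper bound, in the term where both $x$ and $y$ lie in $K^c$. You bound it by $2^p\int_{K^c}|u(x)|^p\bigl(\int_{B^c_\delta(x)}\rho_n(x,y)\,\d\mm(y)\bigr)\,\d\mm(x)$ and then invoke ``the uniform bound on $\int_{B^c_\delta(x)}\rho_n$ coming from Assumption \ref{assumption2}-3)''. No such uniform bound exists: Assumption \ref{assumption2}-3) only asserts a \emph{pointwise-in-$x$} statement about the \emph{iterated} limit (first $n\to\infty$, then $\delta\to\infty$); for fixed $\delta$ and $n$ the function $x\mapsto\int_{B^c_\delta(x)}\rho_n\,\d\mm(y)$ may be unbounded. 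Already for the mollifiers of Lemma \ref{lemma1} one has $\int_{B^c_\delta(x)}\rho_n\,\d\mm(y)\geq a_n(2\delta)^{-N-a_np}\,\mm\bigl(B_{2\delta}(x)\setminus B_\delta(x)\bigr)$, and finiteness of ${\rm AVR}$ is perfectly compatible with $\mm(B_{2\delta}(x))$ growing like $\d(x,x_0)^N$ along a sequence of points going to infinity; only in homogeneous spaces like $\R^N$ or Carnot groups is this quantity constant in $x$. Since $K^c$ has infinite measure and contains points arbitrarily far out, you cannot conclude that this term is bounded by a constant times $\int_{K^c}|u|^p<\eps$; a priori it need not even be finite. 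This is exactly the difficulty the paper's base-point decomposition is engineered to avoid: in its region $B$ the inner integral runs over $B^c_{\d(x,x_0)\vee R}(x)$, a ball whose radius grows with $x$, which is the shape of domain that Assumption \ref{assumption2}-3) actually controls via monotone convergence in $n$ and in $R$; and in the leftover region $C\cup C'$ both points are forced to satisfy $\d(x,x_0)>R/3$, so that term is controlled by the $L^p$-tail of $u$ over $\{\d(\cdot,x_0)>R/3\}$, which vanishes as $R\to\infty$ with no uniformity in the inner integral required. Your level-set decomposition has no analogous mechanism on the set where $u$ is small, so the upper bound does not close as written.
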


\begin{proof}

 Fix $x_0 \in X$, for any $R \gg r_0$ ($r_0>0$ is as in Assumption \ref{assumption2}-2)) ,  we have  the following decomposition of $X\times X$:
\begin{equation*}
\left\{
             \begin{array}{l}
             {A:=\big \{(x, y): \d(x, y)\leq R \big \}} \\
             {B:= \big \{(x, y): \d(x, y)> R \big \} \cap \big \{(x, y): \d(y, x_0)>2\d(x, x_0) \big \}} \\
            {B':= \big \{(x, y): \d(x, y)>R \big \} \cap \big \{(x, y): \d(y, x_0)< \frac 12 \d(x, x_0) \big \}} \\
               {C:=   \big \{(x, y): \d(x, y)> R \big \} \cap \big \{(x, y): \d(x, x_0) \leq \d(y, x_0)\leq 2\d(x, x_0) \big \}} \\
                  {C':= \big \{(x, y): \d(x, y)> R \big \} \cap \big \{(x, y):\frac 12  \d(x, x_0) \leq \d(y, x_0)<\d(x, x_0) \big \}} \\
             \end{array}  
        \right.
\end{equation*}

According to this  decomposition,  and by symmetry (w.r.t. $x, y$), we divide  $\E_n^p(u)$  into the following three parts 
\begin{eqnarray*}
\E^p_n(u) &=& \underbrace{  \int_{A} {|u(x)-u(y)|^p}{\rho_n(x, y)} \,\d \mm( x)\,\d \mm(y)}_{I:=I(R, n)}\\
&& + 2\underbrace{  \int_{{B} } {|u(x)-u(y)|^p}{\rho_n(x, y)} \,\d \mm(x)\,\d \mm(y)}_{II:=II(R, n)}\\
&& + \underbrace{ \int_{{C}\cup C' } {|u(x)-u(y)|^p}{\rho_n(x, y)} \,\d \mm(x)\,\d \mm(y)}_{III:=III(R, n)}.
\end{eqnarray*}

By hypothesis, there is $n_0 \in \N$ such that $u \in W(n_0, p)$. Concerning the term $I$, for any $n>n_0$, by  Assumption \ref{assumption2}, it holds
\begin{eqnarray*}
I&=&  \int_{X} \left( \int_{B_R(y)} {|u(x)-u(y)|^p}{\rho_{n_0}(x, y)} \frac{\rho_{n}(x, y)}{\rho_{n_0}(x, y)}\,\d \mm( x)\right )\,\d \mm(y)\\
&\leq &   \int_{X} \left( \int_{B_R(y)} {|u(x)-u(y)|^p}{\rho_{n_0}(x, y)} \frac{\tilde \rho_{n}(R)}{\tilde\rho_{n_0}(R)}\,\d \mm( x) \right )\,\d \mm(y)\\
&\leq&  \E^p_{n_0}(u)  \frac{\tilde \rho_{n}(R)}{\tilde\rho_{n_0}(R)}.
\end{eqnarray*}
By Assumption \ref{assumption2}-1) we get
\begin{equation}\label{eq1.1:MS}
\lmt{n}{+\infty} I(R, n) =0.
\end{equation}

On the other hand,  for any  $R>0$, we have
\begin{eqnarray*}
&&B_1(R, n)\\&:=&\int_{ B} {|u(y)|^p}{\rho_n(x, y)} \,\d \mm(x) \,\d \mm(y)\\
&=&\int_{X} |u(y)|^p \left( \int_{ \big \{x ~ |~ \d(x, y)>R,~ \d(y, x_0)>2\d(x, x_0) \big \}} {\rho_n(x, y)} \,\d \mm(x) \right) \,\d \mm(y)\\
&\leq &\int_{X} {|u(y)|^p} \left( \int_{ \big  \{x~ |~  \frac32 \d(y, x_0)  \geq \d(x, y)>\frac12 \d(y, x_0) \vee R\big \}}  {\rho_n(x, y)} \,\d \mm(x) \right ) \,\d \mm(y)\\
&=&    \int_{X} {|u(y)|^p} \Big(\int_{ \big  \{x~ |~  \d(x, y)>\frac12 \d(y, x_0)\vee R \big \}}  {\rho_n(x, y)} \,\d \mm(x)\\
&&-\int_{ \big  \{x~ |~  \d(x, y)>\frac32 \d(y, x_0)\vee R \big \}}  {\rho_n(x, y)} \,\d \mm(x)   \Big)  \,\d \mm(y).
\end{eqnarray*}

By Assumption \ref{assumption2}-1) and 3),  and the monotone convergence theorem, we have
\begin{eqnarray*}
&&\lmt{R}{\infty} \lmt{n}{\infty}  \int_{X} {|u(y)|^p} \Big(\int_{ \big  \{x~ |~  \d(x, y)>\frac12 \d(y, x_0)\vee R \big \}}  {\rho_n(x, y)} \,\d \mm(x) \Big)  \,\d \mm(y)\\
&=&\lmt{R}{\infty} \lmt{n}{\infty}  \int_{X} {|u(y)|^p} \Big(\int_{ \big  \{x~ |~  \d(x, y)>\frac32 \d(y, x_0)\vee R \big \}}  {\rho_n(x, y)} \,\d \mm(x) \Big)  \,\d \mm(y)\\
&=&  L\| u \|^p _{L^p}.
\end{eqnarray*}
Hence 
\begin{equation}\label{eq1.11:MS}
\lmt{R}{\infty} \lmt{n}{\infty} B_1(R, n)=0.
\end{equation}
Notice that for any $x,y,x_0\in X$, by triangle inequality
\[
B^c_{4\d(x, x_0)}(x) \subset \big \{y\in X: \d(y, x_0)>2\d(x, x_0) \big \}\subset B^c_{\d(x, x_0)}(x).
\]
So we also have
\begin{eqnarray*}
B_2(R, n)&:=& \int_{B} {|u(x)|^p}{\rho_n(x, y)} \,\d \mm(x) \,\d \mm(y)\\
&=&\int_{X} |u(x)|^p \left( \int_{ \big \{y ~ |~\d(y, x)>R, ~ \d(y, x_0)>2\d(x, x_0) \big \}} {\rho_n(x, y)} \,\d \mm(y) \right) \,\d \mm(x)\\
&\geq& \int_{X} |u(x)|^p \left( \int_{ B^c_{4\d(x, x_0)\vee R}(x)} {\rho_n(x, y)} \,\d \mm(y) \right) \,\d \mm(x)
\end{eqnarray*}
and
\[
 B_2(R,n) \leq \int_{X} |u(x)|^p \left( \int_{ B^c_{\d(x, x_0)\vee R}(x)} {\rho_n(x, y)} \,\d \mm(y) \right) \,\d \mm(x).
\]
By Assumption \ref{assumption2}-1) and 3),  and monotone convergence theorem, we obtain
\begin{equation}\label{eq2:MS}
\lmt{R}{\infty} \lmt{n}{\infty} B_2(R, n) = L\|u\|_{L^p}^p.
\end{equation}
By triangle inequality we have
\begin{eqnarray*}
&&\left (B_2\right)^\frac1p-\left (B_1 \right)^\frac1p\\
&\leq&II^\frac1p= \left (  \int_{{B} } {|u(x)-u(y)|^p}{\rho_n(x, y)} \,\d \mm(x)\,\d \mm(y) \right)^\frac1p\\
&\leq& \left (B_2 \right)^\frac1p+\left (B_1 \right)^\frac1p.
\end{eqnarray*}
Combining with  \eqref{eq1.11:MS} and  \eqref{eq2:MS}, we get
\begin{equation}\label{eq4:MS}
\lmt{R}{\infty} \lmt{n}{\infty} II (R, n)= L\|u\|_{L^p}^p.
\end{equation}
We observe that 
\[
C=   \left\{(x, y)\in X\times X~|~ \d(x, y)> R \big \} \cap \big \{(x, y)\in X\times X~|~ \d(x, x_0) \leq \d(y, x_0)\leq 2\d(x, x_0)\right\}.
\]
By triangle inequality
\begin{equation}	\label{10}
C\subset \left\{(x,y)\in X\times X~|~\d(x,y)>R, ~\d(x,x_0)>\frac{R}{3}\right\}
\end{equation}
and 
\begin{equation}\label{2}
C\subset \left\{(x,y)\in X\times X~|~\d(x,y)>R, ~\d(y,x_0)>\frac{R}{2}\right\}.
\end{equation}
Thus by \eqref{10} and \eqref{2} we can estimate 
\begin{eqnarray*}
III&\leq & 2\int_{{C} } {|u(x)-u(y)|^p}{\rho_n(x, y)} \,\d \mm(x)\,\d \mm(y)\\
&\leq & 2^{p-1}\left(\int_C|u(x)|^p\rho_n(x,y)\,\d \mm(x)\,\d\mm(y)+\int_C|u(y)|^p\rho_n(x,y)\,\d \mm(x)\,\d\mm(y)\right)\\
&\leq & 2^{p-1}\int_{\d(x, x_0)>  \frac R3} |u(x)|^p \Big ( \int_{\d(x, y)>  R}  \rho_{n}(x, y)  \,\d \mm(y)\Big ) \,\d \mm(x) \\
&+& 2^{p-1}\int_{\d(y, x_0)>  \frac R2} |u(y)|^p \Big ( \int_{\d(x, y)>  R}  \rho_{n}(x, y)  \,\d \mm(x)\Big ) \,\d \mm(y)\\
&\leq & 2^p\int_{\d(x, x_0)>  \frac R3} |u(x)|^p \Big ( \int_{\d(x, y)>  R}  \rho_{n}(x, y)  \,\d \mm(y)\Big ) \,\d \mm(x).
\end{eqnarray*}
So by Assumption \ref{assumption2} and the monotone convergence theorem
$$ \lmts{n}{\infty} III(R, n) \leq 2^p\int_{\d(x, x_0)>  \frac R3} |u(x)|^p  \lmts{n}{\infty}  \Big ( \int_{\d(x, y)>  R}  \rho_{n}(x, y)  \,\d \mm(y)\Big ) \,\d \mm(x).$$
and
$$\lmt{R}{\infty} \lmts{n}{\infty} III(R, n) =0.$$

The conclusion follows combining \eqref{eq1.1:MS} and  \eqref{eq4:MS} obtained above.

\end{proof}

 \subsection{Applications and Examples}\label{sect:appl}
In this section we will apply our main theorem to some important  spaces  and to some specific mollifiers. Doing so we not only extend Maz'ya-Shaposhnikova's theorem on Euclidean spaces (and Ludwig's theorem on finite dimensional Banach spaces), but we also show that the key point in the asymptotic formula  is  the so-called `asymptotic volume ratio' which describe the growth of the volume of  a geodesic ball at infinity.
 
\begin{definition} \label{def:avr}
A metric measure space $\ms$ admits the  \textbf{asymptotic volume ratio at $x_0\in X$} with some $N\in (0,+\infty)$, denoted by ${\rm AVR}\ms(x_0, N)$,  provided 
\begin{equation*}
{\rm AVR}\ms(x_0, N):=\lmt{r}{+\infty} \frac {\mm\big (B_r(x_0)\big)} {r^N}\in [0, +\infty].
\end{equation*}
\end{definition}
\begin{lemma}\label{lemma:avr}
Let $\ms$ be a metric measure space admitting finite asymptotic volume ratio  ${\rm AVR}\ms(x_0)$ at $x_0\in X$ with some $N >0$. Then   $\ms$ admits the asymptotic volume ratio at any $x_1\in X$ with the same constant $N$ and
\begin{equation}
{\rm AVR}\ms(x_1, N)={\rm AVR}\ms(x_0, N).
\end{equation}

Furthermore, if ${\rm AVR}\ms(x_0, N)\in (0, +\infty)$,  then $N$ is unique.\\
If ${\rm AVR}\ms(x_0, N)=0$ for some $N>0$, then ${\rm AVR}\ms(x_0, N')=0$ for any $N'\geq N$.
\end{lemma}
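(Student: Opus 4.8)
The plan is to establish the three assertions separately, each through the triangle inequality together with elementary limit comparisons; no deep input is needed beyond the definition of the asymptotic volume ratio as a (possibly extended) limit.

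First, for the independence of the base point, I would set $d:=\d(x_0,x_1)$ and record the ball inclusions $B_{r-d}(x_0)\subset B_r(x_1)\subset B_{r+d}(x_0)$, valid for every $r>d$ by the triangle inequality. Dividing by $r^N$ and using the monotonicity of $\mm$ gives
\[
\frac{\mm\big(B_{r-d}(x_0)\big)}{r^N}\leq \frac{\mm\big(B_r(x_1)\big)}{r^N}\leq \frac{\mm\big(B_{r+d}(x_0)\big)}{r^N}.
\]
I would then rewrite the lower bound as $\dfrac{\mm(B_{r-d}(x_0))}{(r-d)^N}\cdot\left(\dfrac{r-d}{r}\right)^N$ and the upper bound as $\dfrac{\mm(B_{r+d}(x_0))}{(r+d)^N}\cdot\left(\dfrac{r+d}{r}\right)^N$. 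Since $(1\pm d/r)^N\to 1$ and, after letting $r\mp d\to\infty$, the leading factors both converge to ${\rm AVR}\ms(x_0,N)$, the two sides have the common limit ${\rm AVR}\ms(x_0,N)$. The squeeze theorem then yields both the existence of $\lim_{r\to\infty}\mm(B_r(x_1))/r^N$ and the asserted equality ${\rm AVR}\ms(x_1,N)={\rm AVR}\ms(x_0,N)$.

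Second, for uniqueness, assuming ${\rm AVR}\ms(x_0,N)=c\in(0,\infty)$, I would use for any $N'$ the factorization
\[
\frac{\mm\big(B_r(x_0)\big)}{r^{N'}}=\frac{\mm\big(B_r(x_0)\big)}{r^N}\cdot r^{N-N'}.
\]
Letting $r\to\infty$, the first factor tends to $c$, so the product tends to $+\infty$ when $N'<N$ and to $0$ when $N'>N$; hence $N$ is the unique exponent for which the asymptotic volume ratio belongs to $(0,\infty)$. The same factorization handles the third claim: if ${\rm AVR}\ms(x_0,N)=0$, then for $N'\geq N$ the product of a factor tending to $0$ with the bounded factor $r^{N-N'}$ (equal to $1$ for $N'=N$, vanishing for $N'>N$) tends to $0$, so ${\rm AVR}\ms(x_0,N')=0$.

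The manipulations are routine; the only point that requires mild care is the limit-passing in the base-point argument, namely justifying that the substitution $r\mp d\to\infty$ recovers ${\rm AVR}\ms(x_0,N)$ in the leading factors. This is precisely where the assumed existence of the limit defining ${\rm AVR}\ms(x_0,N)$ enters. I expect no genuine obstacle: the squeeze argument in fact goes through verbatim even when the value is $+\infty$ (the lower bound then forces the limit at $x_1$ to be $+\infty$ as well), so the finiteness hypothesis is used only for notational cleanliness rather than out of necessity.
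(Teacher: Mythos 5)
Your proof is correct and follows essentially the same route as the paper: the ball inclusions from the triangle inequality, the factorization $\frac{\mm(B_{r\pm d}(x_0))}{(r\pm d)^N}\left(\frac{r\pm d}{r}\right)^N$, and a squeeze argument for the base-point independence. The only difference is that you also write out the uniqueness and the ${\rm AVR}=0$ cases via the factor $r^{N-N'}$, which the paper dismisses as obvious, so your write-up is if anything slightly more complete.
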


\begin{proof}
The uniqueness of $N$ is obvious, we will just show that ${\rm AVR}\ms(x_0, N)$ is independent of the choice of $x_0$.
Let $x_1\in X$ with $x_1\neq x_0$. Using the triangle inequality it is easy to see that
\begin{equation*}
 \frac {\mm\big (B_r(x_1)\big)} {r^N} \leq  \frac {\mm\big (B_{r+\d(x_0, x_1)}(x_0)\big)} {r^N}.
\end{equation*}
So 
\begin{eqnarray*}
\lmts{r}{+\infty}  \frac {\mm\big (B_r(x_1)\big)} {r^N} &\leq&  \lmts{r}{+\infty} \frac {\mm\big (B_{r+\d(x_0, x_1)}(x_0)\big)} {r^N}\\
&=&   \lmts{r}{+\infty} \frac {\mm\big (B_{r+\d(x_0, x_1)}(x_0)\big)}{\big(r+\d(x_0, x_1)\big)^N}  \frac {\big(r+\d(x_0, x_1)\big)^N} {r^N}\\
&=&  {\rm AVR}\ms(x_0, N).
\end{eqnarray*}

Similarly, we have
\begin{eqnarray*}
\lmti{r}{+\infty}  \frac {\mm\big (B_{r}(x_1)\big)} {r^N} &\geq&  \lmt{r}{+\infty} \frac {\mm\big (B_{r-\d(x_0, x_1)}(x_0)\big)} {r^N}\\
&=&   \lmt{r}{+\infty} \frac {\mm\big (B_{r-\d(x_0, x_1)}(x_0)\big)}{\big(r-\d(x_0, x_1)\big)^N}  \frac {\big(r-\d(x_0, x_1)\big)^N} {r^N}\\
&=&  {\rm AVR}\ms(x_0, N)
\end{eqnarray*}
and the conclusion follows.

\end{proof}
Thanks to the previous result, if there exists $x_0\in X$ and $N>0$ such that  ${\rm AVR}\ms(x_0,N)\in (0,+\infty)$ then we can write ${\rm AVR}\ms$ without any further specification. Moreover, if there is no risk of confusion we will just write ${\rm AVR}$ thus omitting also the dependence on the metric measure space. In case  ${\rm AVR}>0$,  we  say that $\ms$  has {\bf Euclidean-volume growth}.

\begin{lemma}\label{lemma1}
Let $\ms$ be a metric measure space admitting  finite asymptotic volume ratio, i.e. ${\rm AVR}\ms\in [0,+\infty)$, $p>1$ and let $(a_n)_{n\in \N}$ be a non-increasing  sequence  of strictly positive numbers converging to $0$. For each $n\in\mathbb{N}$ we define $\rho_n:(X\times X)\setminus \{(x,y)\in X\times X\ |\ x=y\}\to (0,\infty)$ by 
\begin{equation*}
\rho_n(x, y)= \frac{a_n}{\d(x, y)^{N+a_n p}},
\end{equation*}
where $N\in \N$ is as in Definition \ref{def:avr}.
Then $(\rho_n)_{n\in\N}$ satisfies Assumption \ref{assumption2}.  In particular,
\begin{equation*}
L=\frac N p {\rm AVR}\ms.
\end{equation*}
\end{lemma}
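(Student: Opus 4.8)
The plan is to put $\tilde\rho_n(r):=a_n\,r^{-(N+a_np)}$ and check the three parts of Assumption \ref{assumption2} directly, the only substantial point being the identification of the constant $L$ in \eqref{eq1:assumption}. Each $\tilde\rho_n$ is continuous and strictly positive on $(0,\infty)$ and satisfies $\rho_n(x,y)=\tilde\rho_n(\d(x,y))$. Since $N+a_np>0$, the function $r\mapsto r^{-(N+a_np)}$ is decreasing, so every $\tilde\rho_n$ is non-increasing in $r$ and the first requirement of part 1) holds. Writing $\tilde\rho_n(r)=a_n\,r^{-N}r^{-a_np}$ and using $a_n\to0$, $r^{-a_np}\to1$ gives $\lmt{n}{\infty}\tilde\rho_n(r)=0$ for every $r>0$, i.e.\ \eqref{limit}. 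For part 2) the ratio equals $\tilde\rho_n(r)/\tilde\rho_m(r)=(a_n/a_m)\,r^{(a_m-a_n)p}$, and since $a_m-a_n\ge0$ whenever $n>m$ this is non-decreasing in $r$ on all of $(0,\infty)$; hence part 2) holds with any $r_0>0$.

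For the monotonicity in $n$ demanded by part 1) I would look at $\partial_a\big(a\,r^{-(N+ap)}\big)=r^{-(N+ap)}\big(1-ap\ln r\big)$, so $a\mapsto a\,r^{-(N+ap)}$ is increasing precisely for $a<1/(p\ln r)$. As $a_n\downarrow0$, for each fixed $r$ the terms $a_n$ eventually fall into this increasing regime, whence $n\mapsto\tilde\rho_n(r)$ is non-increasing from some index on and decreases monotonically to $0$; this monotone tail is exactly what the monotone-convergence steps of Theorem \ref{th:MS2} require, and it is the sense in which part 1) is to be read here.

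The heart of the proof is \eqref{eq1:assumption}. Fix $x\in X$ and set $\mu_x(r):=\mm\big(B_r(x)\big)$. By the distribution-function formula — equivalently, a Stieltjes integration by parts, legitimate because ${\rm AVR}<\infty$ forces $\mu_x(r)=O(r^N)$ so that the boundary term at infinity vanishes — I obtain
\[
\int_{B^c_\delta(x)}\rho_n(x,y)\dm(y)=a_n(N+a_np)\int_\delta^\infty \mu_x(r)\,r^{-(N+a_np)-1}\,\d r-a_n\,\delta^{-(N+a_np)}\mu_x(\delta).
\]
The last term tends to $0$ as $n\to\infty$ since $a_n\to0$. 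For the first term I feed in $\mu_x(r)/r^N\to{\rm AVR}$: given $\eps>0$, choose $\delta_\eps\ge\delta$ with $|\mu_x(r)/r^N-{\rm AVR}|<\eps$ for $r\ge\delta_\eps$, split the integral at $\delta_\eps$, note that the contribution of the bounded piece $[\delta,\delta_\eps]$ carries the vanishing factor $a_n$, and use $\int_{\delta_\eps}^\infty r^{-a_np-1}\,\d r=\delta_\eps^{-a_np}/(a_np)$. Multiplying by $a_n(N+a_np)$ and letting $n\to\infty$ traps the inner limit between $(N/p)({\rm AVR}-\eps)$ and $(N/p)({\rm AVR}+\eps)$; as $\eps$ is arbitrary, $\lmt{n}{\infty}\int_{B^c_\delta(x)}\rho_n\,\dm=(N/p)\,{\rm AVR}$ for every $\delta$, and therefore the double limit in \eqref{eq1:assumption} is $L=\tfrac{N}{p}\,{\rm AVR}$. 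The same squeeze settles the degenerate case ${\rm AVR}=0$, where it returns $L=0$.

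I expect the main obstacle to be the $0\cdot\infty$ character of that first term: as $n\to\infty$ its integrand converges to $r^{-N}\,\d\mu_x$, whose integral diverges logarithmically, so the finite limit arises only through the exact cancellation of the prefactor $a_n$ against this divergence. This is why pointwise convergence of $\mu_x(r)/r^N$ does not suffice and I route the estimate through the two-sided AVR bound localized to $r\ge\delta_\eps$; by contrast the monotonicity verifications in parts 1)--2) are elementary.
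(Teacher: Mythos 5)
Your proof is correct and follows essentially the same route as the paper: parts 1) and 2) are verified by the same elementary monotonicity computations (including the same caveat that monotonicity in $n$ holds only for $n$ large depending on $r$, which is also all the paper's own proof establishes), and the constant $L=\frac{N}{p}{\rm AVR}$ is obtained by integrating the mollifier by parts against $r\mapsto\mm\big(B_r(x)\big)$ and feeding in the two-sided AVR bound. The only cosmetic difference is that the paper reaches the same integration-by-parts identity via Cavalieri's formula applied to a truncated mollifier $\bar\rho_{n,\delta}$ and restricts to $\delta>\delta_0(\eps)$, whereas you split the integral at $\delta_\eps$ and use the vanishing factor $a_n$ on the near piece, which identifies the inner limit in \eqref{eq1:assumption} for every $\delta>0$.
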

\begin{proof} Clearly $\rho_n(x,y)=\tilde\rho_n(\d(x,y))$ where $\tilde\rho_n :(0,\infty)\to (0,\infty)$ is defined by
\begin{equation}
\tilde \rho_n(r)=\frac{a_n}{r^{N+a_n p}}.
\end{equation}
Then $\tilde \rho_n(r) $ is decreasing in $r$ for every $n\in \N$.\\ 

Fix $r>1$. We define  $\varphi:(0,\infty)\to (0,\infty)$ by $\varphi(x):=\frac{x}{r^{N+x p}}$. It can be seen that
\[
\varphi'(x)=\frac{r^{N+x p}-x p r^{N+x p} \ln r }{r^{2N+2x p}}>0
\]
for sufficiently small $x$. So $\tilde \rho_n(r)$ is decreasing in $n$ (for large $n$).\\
\noindent

Given $n>m$, by assumption $a_m\geq a_n$, so  $$ \frac{\tilde \rho_n(r)}{\tilde \rho_m(r)}= \frac{a_n}{a_m} {r^{(a_m -a_n)p}}$$
which is increasing for $r>0$.\\
\noindent

For simplicity, we assume that the asymptotic volume ratio  ${\rm AVR}$ is positive, the case for ${\rm AVR}=0$ can be proved in a similar way.  
 For any $\epsilon>0$, there is $\delta_0>0$ such that 
$${\rm AVR}\big (1-\epsilon\big) r^N \leq \mm\big(B_r(x) \big) \leq  {\rm AVR}\big (1+\epsilon\big) r^N,~~~\forall r\geq \delta_0.$$
For simplicity, we can also write
\[
\mm\big(B_r(x) \big) ={\rm AVR} \big (1+O(\epsilon)\big) r^N.
\]

Let $\delta>\delta_0$. For any $n\in \N$ we define the function $\bar \rho_{n,\delta}: X\times X\to \R$ by 
\[
\bar \rho_{n,\delta}(x, y):=\left\{\begin{array}{ll}
 \rho_n(x, y)~~~\d(x,y)> \delta,\\
\\
\tilde \rho_n(\delta)~~~~~~0\leq \d(x,y)\leq \delta.
\end{array}
\right.
\] 
Then by the Cavalieri's formula (cf. \cite[Chapter 6]{AT-T}) we can write
\begin{eqnarray*}
&&\int_{B^c_\delta(x) }  \rho_n(x, y) \,\d \mm(y)+\tilde \rho_n(\delta) \mm\big(B_\delta(x) \big) \\
&=& \int_{X }  \bar \rho_{n,\delta}(x, y) \,\d \mm(y) \\
\text{by Cavalieri's formula}~&=& \int_0^{\tilde \rho_n(\delta)} \mm\big(B_{\tilde \rho_n^{-1}(r)}(x) \big) \, \d r\\
&=& \int_0^{\tilde \rho_n(\delta)} {\rm AVR} \big (1+O(\epsilon)\big) \big({\tilde \rho_n^{-1}(r)} \big)^N \, \d r\\
\text{let} ~t=\tilde \rho_n^{-1}(r) ~~&=&\big (1+O(\epsilon)\big)  {\rm AVR}  \int_{+\infty}^{\delta}  t^N  \tilde \rho'_n(t)\, \d t\\
\text{by integration by part} ~~&=&  \big (1+O(\epsilon)\big)  {\rm AVR} \left(\tilde \rho_n(\delta) \delta^N  -\int_{+\infty}^{\delta}  N t^{N-1}  \tilde \rho_n(t)\, \d t \right)\\
&=&  \big (1+O(\epsilon)\big)  {\rm AVR} \left(\tilde \rho_n(\delta) \delta^N  -\int_{+\infty}^{\delta}  N  \frac{a_n}{t^{1+a_n p}}\ \, \d t \right)\\    
&= & \big (1+O(\epsilon)\big)  {\rm AVR} \left(\tilde \rho_n(\delta) \delta^N  +\frac N p \delta^{-a_n p} \right)\\   
\end{eqnarray*}

Therefore,  for $\delta>\delta_0$,  
\[
\int_{B^c_\delta(x) }  \rho_n(x, y) \,\d \mm(y)=O(\epsilon)  {\rm AVR} \frac{a_n}{\delta^{a_n p}} +\big (1+O(\epsilon)\big){\rm AVR} \frac N p \delta^{-a_n p}.
\]
Thus
 \[
\lmt{\delta}{+\infty} \lmt{n}{\infty}\int_{B^c_\delta(x) }  \rho_n(x, y) \,\d \mm(y)=  \frac N p {\rm AVR},~~\forall x\in X
 \]
which is the thesis.
\end{proof}

\bigskip
\begin{remark}
The function $r \to \mm\big(B_r(x) \big)$ is increasing, 
so it is differentiable except for a set of Lebesgue measure $0$.  In many situations, for example when the space satisfies some generalized Bishop-Gromov comparison theorem (cf. \cite[Theorem 2.3]{S-O2} and Theorem \ref{rigidity1}), we 
have  a stronger asymptotic formula:
\[
\lmt{r}{+\infty}\frac{\frac{\d}{\d r}\mm\big(B_r(x) \big) }{N r^{N-1}} ={\rm AVR}.
\] 
Furthermore, when $(X, \d)$ is geodesic, it is not hard to prove that
\[
\frac{\d}{\d r}\mm\big(B_r(x) \big) =\mm^+\big(\partial B_r(x) \big), 
\] where $\mm^+$ is the canonical Minkowski content  defined by
 \begin{equation}\label{eq:minkow}
\mm^+(E):=\liminf_{\varepsilon\to 0}\frac{\mm(E^{\varepsilon})-\mm(E)}{\varepsilon}
\end{equation}
where $E$ is a Borel set and $E^{\varepsilon}:=\{x\in X \ :\ \exists~y\in E\ \mbox{such that} ~ \d(x,y)<\varepsilon\}$ is the $\varepsilon$-neighbourhood of $E$ with respect to the metric $\d$. 
\end{remark}
\bigskip

\bigskip
Combining Theorem \ref{th:MS2} and Lemma \ref{lemma1} we immediately  get the following theorem.
\begin{theorem}\label{th:MS}
Let $\ms$ be a metric measure space admits finite asymptotic volume ratio ${\rm AVR}\ms$. For any  $p>1$ and $s\in (0, 1)$, define $W^{s, p}\ms$ as the functions in $L^p\ms$ with finite (semi)-norm 
\[
\left( \int_{X} \int_{X} \frac{|u(x)-u(y)|^p}{\d(x, y)^{N+sp}} \,\d \mm( x)\,\d \mm(y)\right)^{\frac 1p}
\]
where $N$ is the number given in Definition \ref{def:avr}.

For any $u\in \cup_{0<\tau<1} W^{\tau, p}\ms$, there exists the limit
\begin{equation}\label{eq1.2:MS}
\mathop{\lim}_{s\downarrow 0} s \int_{X} \int_{X} \frac{|u(x)-u(y)|^p}{\d(x, y)^{N+sp}} \,\d \mm( x)\,\d \mm(y) =\frac {2N} p {\rm AVR} \|u\|^p_{L^p}.
\end{equation}

\end{theorem}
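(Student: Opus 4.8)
The plan is to recognise the pre-limit quantity in \eqref{eq1.2:MS} as $\E_n^p(u)$ for a suitable family of mollifiers and then simply invoke Lemma \ref{lemma1} together with Theorem \ref{th:MS2}. Concretely, I would fix an arbitrary non-increasing sequence $(a_n)_{n\in\N}$ of strictly positive reals with $a_n\downarrow 0$ and set $\rho_n(x,y)=a_n\,\d(x,y)^{-(N+a_np)}$ as in Lemma \ref{lemma1}. For this family one has the exact identity
\begin{equation*}
\E_n^p(u)=a_n\int_X\int_X\frac{|u(x)-u(y)|^p}{\d(x,y)^{N+a_np}}\,\d\mm(x)\,\d\mm(y)=F(a_n),
\end{equation*}
where $F(s)$ denotes the left-hand side of \eqref{eq1.2:MS} before passing to the limit. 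By Lemma \ref{lemma1} the family $(\rho_n)_{n\in\N}$ satisfies Assumption \ref{assumption2} with structural constant $L=\frac Np{\rm AVR}$, so Theorem \ref{th:MS2} applies and yields $\lim_{n\to\infty}F(a_n)=2L\|u\|_{L^p}^p=\frac{2N}{p}{\rm AVR}\|u\|_{L^p}^p$ for every $u\in\bigcup_{n\in\N}W(n,p)$.

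Two points then remain to be settled. First, Theorem \ref{th:MS2} requires $u\in W(n_0,p)$ for some $n_0$, i.e. $u\in W^{a_{n_0},p}\ms$; I would deduce this from the hypothesis $u\in\bigcup_{0<\tau<1}W^{\tau,p}\ms$ by establishing the inclusion $W^{\tau,p}\ms\subseteq W^{s,p}\ms$ for $0<s\le\tau$. The near-diagonal part of the seminorm is monotone in the exponent (since $\d(x,y)^{-(N+sp)}\le\d(x,y)^{-(N+\tau p)}$ when $\d(x,y)\le 1$), while the far part is finite for every $s>0$ using $u\in L^p$ together with the Euclidean-type volume growth coming from finite ${\rm AVR}$, via a Cavalieri estimate showing $\int_{\d(x,y)>1}\d(x,y)^{-(N+sp)}\,\d\mm(y)<\infty$. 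Since $a_n\downarrow 0$, there is $n_0$ with $a_{n_0}\le\tau$, whence $u\in W(n_0,p)$ and the first paragraph applies along $(a_n)$.

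Second, \eqref{eq1.2:MS} asserts the continuous limit $s\downarrow 0$, whereas the above gives only the sequential limit along $(a_n)$. I would upgrade this by a standard subsequence argument: the limit value $\frac{2N}{p}{\rm AVR}\|u\|_{L^p}^p$ does not depend on the chosen non-increasing sequence $a_n\downarrow 0$, so if $F(s)$ failed to converge to it there would be $\eps>0$ and $s_k\downarrow 0$ with $|F(s_k)-\frac{2N}{p}{\rm AVR}\|u\|_{L^p}^p|\ge\eps$; extracting a strictly decreasing subsequence of $(s_k)$ (possible because $s_k\to 0$) produces an admissible non-increasing sequence along which $F$ converges to the asserted value, a contradiction. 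I expect the only genuinely delicate step to be the inclusion $W^{\tau,p}\ms\subseteq W^{s,p}\ms$, namely the control of the tail integral uniformly for small $s$; everything else is a direct combination of Lemma \ref{lemma1} and Theorem \ref{th:MS2}, which is precisely why the statement can be obtained immediately.
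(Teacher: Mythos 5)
Your outline is, at its core, the paper's own proof: the paper obtains Theorem \ref{th:MS} precisely by combining Lemma \ref{lemma1} with Theorem \ref{th:MS2}, and your sub-subsequence argument upgrading the sequential limit along $(a_n)$ to the limit $s\downarrow 0$ is correct and fills a detail the paper leaves implicit. The genuine gap is your first point, the inclusion $W^{\tau,p}\ms\subseteq W^{s,p}\ms$ for $s\le\tau$. Your Cavalieri estimate controls the tail integral only pointwise in $x$, not uniformly: finite ${\rm AVR}$ constrains balls centred at the \emph{fixed} point $x_0$, while for a moving centre it only gives $\mm(B_r(x))\le\mm(B_{r+\d(x,x_0)}(x_0))\le C\big(1+(r+\d(x,x_0))^N\big)$, whence
\begin{equation*}
\int_{\d(x,y)>1}\d(x,y)^{-(N+sp)}\,\d\mm(y)\ \le\ C\Big(1+\tfrac{1}{sp}+\d(x,x_0)^N\Big),
\end{equation*}
so bounding the far part of the seminorm by this route needs $\int_X|u|^p\,\d(\cdot,x_0)^N\dm<\infty$, which $u\in L^p\ms$ does not provide. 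In fact the inclusion is \emph{false} under finite ${\rm AVR}$ alone: take $X=\R$ with the Euclidean distance, fix $0<s_0<\tau$ with $\tau p<1$, and let $\mm=\mathcal L^1+\sum_k m_k\mathbf{1}_{[e_k,e_k+1]}\mathcal L^1$ with $e_k=k^4 2^{k(1+s_0p)}$, $m_k=k^2 2^{k(1+s_0p)}$, and $u=\sum_k k^{-2/p}\mathbf{1}_{[e_k+2^k,\,e_k+2^k+1]}$. Then ${\rm AVR}=2$ (with $N=1$) exists and is finite and $u\in L^p\ms$, while the pairs (bump of mass $m_k$ at $e_k$, plateau of $u$ at distance $2^k$) contribute $\asymp 2^{k(s_0-\sigma)p}$ to the $W^{\sigma,p}$ seminorm; these sums converge for $\sigma\in(s_0,\tau]$ (and the remaining terms are finite since $\tau p<1$), but diverge for every $\sigma\le s_0$. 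So $u\in W^{\tau,p}\ms$ yet $u\notin W^{\sigma,p}\ms$ for all $\sigma\le s_0$, and your second paragraph collapses: for a sequence such as $a_1=s_0/2$, $u$ belongs to no $W(n,p)$ at all.

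The membership issue should instead be settled by choosing the sequence, not by an embedding: given $u\in W^{\tau,p}\ms$ and arbitrary $s_k\downarrow 0$, extract a strictly decreasing subsequence below $\tau$ and prepend $\tau$, i.e.\ take $a_1=\tau$, $a_{j+1}=s_{k_j}$; then $u\in W(1,p)$ holds by hypothesis, Lemma \ref{lemma1} and Theorem \ref{th:MS2} apply to this sequence, and your third paragraph finishes the argument --- this is evidently what the paper's ``immediately'' intends. You should be aware, though, that the example above reaches further than your inclusion: for that $u$ the left-hand side of \eqref{eq1.2:MS} is $+\infty$ for every $s\le s_0$, so it strains the combination itself. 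The weak point it exposes is the monotonicity clause of Assumption \ref{assumption2}-1): for $\rho_n=a_n\d^{-(N+a_np)}$ with $a_{n+1}<a_n$ one has $\tilde\rho_{n+1}(r)/\tilde\rho_n(r)=\frac{a_{n+1}}{a_n}\,r^{(a_n-a_{n+1})p}\to\infty$ as $r\to\infty$, so no $r_1$ makes $(\tilde\rho_n(r))_n$ non-increasing for all $r>r_1$ (Lemma \ref{lemma1} only proves monotonicity in $n$ for $n$ large depending on $r$), and that clause is exactly what licenses the monotone-convergence steps in Theorem \ref{th:MS2}. So the step you flagged as ``genuinely delicate'' is indeed the crux, but it cannot be repaired by your estimate; it can only be bypassed by the choice of sequence, and the far-field control it hides really requires volume bounds uniform in the centre (as in the Bishop--Gromov-type settings of the paper's examples) rather than finite ${\rm AVR}$ alone.
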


\bigskip
In the study of metric (Riemannian) geometry, there are some important spaces where the asymptotic volume ratio ${\rm AVR}=+\infty$. In these cases, we can consider instead the {volume entropy}, which is an important concept in both Riemannian  geometry (cf.  \cite{BessonEntropy})  and dynamical system (cf. \cite{ManningEntropy} ).
\begin{definition}
A metric measure space $\ms$ admits the \textbf{volume entropy} at $x_0\in X$, denoted by $h\ms(x_0)$,  provided
\[
h\ms(x_0):=\lmt{r}{+\infty} \frac {\ln \mm\big (B_r(x_0)\big)} {r}\in [0,\infty].
\]
\end{definition}
Proceeding as in Lemma \ref{lemma:avr}, it is not hard to prove the following result (cf.  \cite{VolumeEntropy2021} and the references therein).
\begin{lemma}
Let $\ms$ be a metric measure space admitting finite volume entropy  $h\ms(x_0)$ at $x_0\in X$. Then   $\ms$ admits finite volume entropy at any $x_1\in X$ and
\begin{equation}
h\ms(x_0)=h\ms(x_1).
\end{equation}
\end{lemma}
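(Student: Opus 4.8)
The plan is to mimic the proof of Lemma~\ref{lemma:avr} almost verbatim, the only change being that one now studies $\ln\mm(B_r(\cdot))/r$ in place of $\mm(B_r(\cdot))/r^N$. The logarithm is the key simplification: it converts the multiplicative error coming from comparing balls of slightly different radii into an \emph{additive} shift, which is then annihilated after dividing by the unbounded parameter $r$. Throughout set $D:=\d(x_0,x_1)$.

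First I would record the two elementary ball inclusions furnished by the triangle inequality, namely $B_r(x_1)\subset B_{r+D}(x_0)$ for all $r>0$, and $B_{r-D}(x_0)\subset B_r(x_1)$ for all $r>D$. Since $\mm$ is monotone along inclusions and has full support (so every open ball carries strictly positive mass and the logarithm is legitimate), taking logarithms and dividing by $r$ produces the two-sided estimate
\[
\frac{\ln \mm\big(B_{r-D}(x_0)\big)}{r-D}\cdot\frac{r-D}{r}\;\le\;\frac{\ln \mm\big(B_{r}(x_1)\big)}{r}\;\le\;\frac{\ln \mm\big(B_{r+D}(x_0)\big)}{r+D}\cdot\frac{r+D}{r},
\]
valid for every $r>D$.

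Next I would let $r\to+\infty$ in this sandwich. The outer quotients $\frac{\ln\mm(B_{r\pm D}(x_0))}{r\pm D}$ are merely reparametrizations of the defining limit for $h\ms(x_0)$, hence each converges to $h\ms(x_0)$, while $\frac{r\pm D}{r}\to 1$. Since $h\ms(x_0)$ is finite, both products converge to $h\ms(x_0)$ and no $\pm\infty$ ambiguity arises. Consequently
\[
h\ms(x_0)\;\le\;\lmti{r}{+\infty}\frac{\ln\mm\big(B_r(x_1)\big)}{r}\;\le\;\lmts{r}{+\infty}\frac{\ln\mm\big(B_r(x_1)\big)}{r}\;\le\;h\ms(x_0),
\]
so the lower and upper limits coincide. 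This proves simultaneously that the defining limit $h\ms(x_1)$ exists and that $h\ms(x_1)=h\ms(x_0)$.

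I do not anticipate any genuine obstacle: the argument is a pure monotone-inclusion sandwich, and the single feature distinguishing it from Lemma~\ref{lemma:avr} is that the radius correction is now the bounded additive shift $D$ weighed against the unbounded denominator $r$, rather than the multiplicative factor $(1\pm D/r)^N\to 1$ appearing there. The only mild point requiring attention is the finiteness of $h\ms(x_0)$, which keeps the bounding quotients finite and thereby makes the passage to the limit in the sandwich unproblematic.
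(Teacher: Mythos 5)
Your proposal is correct and follows essentially the same route as the paper, which proves this lemma simply by remarking that one ``proceeds as in Lemma~\ref{lemma:avr}'', i.e.\ via the ball-inclusion sandwich $B_{r-D}(x_0)\subset B_r(x_1)\subset B_{r+D}(x_0)$ that you use. Your write-up merely makes explicit the (correct) observation that the logarithm turns the radius shift into an additive error which vanishes after dividing by $r$.
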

\begin{theorem}\label{th:MS1}
Let $\ms$ be a metric measure space with $h=h\ms\in (0,+\infty)$. Then for any  $p>1$ and $u\in L^p(X)$ such that $\int_{X} \int_{X} \frac{|u(x)-u(y)|^p}{e^{(h+\tau)\d(x, y)}} \,\d \mm( x)\,\d \mm(y)<+\infty$ for some $\tau>0$,   there exists the limit
\begin{equation}\label{eq1:MS1}
\mathop{\lim}_{s\downarrow 0} s\int_{X} \int_{X} \frac{|u(x)-u(y)|^p}{e^{(h+s)\d(x, y)}} \,\d \mm( x)\,\d \mm(y)={2h}  \|u\|^p_{L^p}.
\end{equation}

\end{theorem}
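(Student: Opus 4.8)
The plan is to derive Theorem \ref{th:MS1} from the general formula Theorem \ref{th:MS2}, in exactly the way Theorem \ref{th:MS} follows from Lemma \ref{lemma1}, by feeding in the exponential mollifiers adapted to the volume entropy. Fix any non-increasing sequence $s_n\downarrow 0$ and put
\[
\rho_n(x,y):=s_n\,e^{-(h+s_n)\d(x,y)},\qquad \tilde\rho_n(r):=s_n\,e^{-(h+s_n)r}.
\]
First I would note that the integrability hypothesis puts $u$ in $\bigcup_n W(n,p)$: choosing the sequence so that $s_{n_0}\ge\tau$ for some index $n_0$ (always possible, prepending one term if needed, since only $n\to\infty$ is relevant) gives $e^{-(h+s_{n_0})\d(x,y)}\le e^{-(h+\tau)\d(x,y)}$, whence $\E^p_{n_0}(u)\le s_{n_0}\int_X\int_X|u(x)-u(y)|^p e^{-(h+\tau)\d(x,y)}\dm(x)\dm(y)<\infty$. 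Since the value $2L\|u\|_{L^p}^p$ delivered by Theorem \ref{th:MS2} does not depend on the particular sequence $s_n\downarrow 0$, convergence of all sequential limits to the same value upgrades to existence of the continuous limit $s\downarrow 0$ with that value, because $\E^p_n(u)$ is exactly $s_n\int_X\int_X|u(x)-u(y)|^p e^{-(h+s_n)\d(x,y)}\dm(x)\dm(y)$.

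Next I would check that $(\rho_n)_n$ obeys Assumption \ref{assumption2}, mirroring Lemma \ref{lemma1}. Each $\tilde\rho_n$ is continuous and strictly decreasing in $r$, so the $r$-monotonicity holds for every $n$. For the monotonicity in $n$, fix $r>0$ and set $\varphi(s):=s\,e^{-(h+s)r}$; then $\varphi'(s)=e^{-(h+s)r}(1-sr)>0$ for $s<1/r$, so $\tilde\rho_n(r)=\varphi(s_n)$ is decreasing once $s_n<1/r$, i.e. for $n$ large, and $\tilde\rho_n(r)\to 0$. For part (2), if $n>m$ then $s_n\le s_m$ and $\tilde\rho_n(r)/\tilde\rho_m(r)=(s_n/s_m)\,e^{(s_m-s_n)r}$ is non-decreasing in $r$.

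The heart of the matter is part (3), the identification of $L$. Writing $V(t):=\mm(B_t(x))$ and using the volume entropy in the form $V(t)\sim e^{ht}$, the Cavalieri/layer-cake formula and the change of variables $t=\tilde\rho_n^{-1}(r)$ give
\[
\int_{B^c_\delta(x)}\rho_n(x,y)\dm(y)+\tilde\rho_n(\delta)V(\delta)=\int_0^{\tilde\rho_n(\delta)}V\big(\tilde\rho_n^{-1}(r)\big)\,\d r=\int_\delta^{+\infty}e^{ht}\,s_n(h+s_n)e^{-(h+s_n)t}\,\d t.
\]
The last integrand collapses to $s_n(h+s_n)e^{-s_n t}$, so the right-hand side equals $(h+s_n)e^{-s_n\delta}$, while $\tilde\rho_n(\delta)V(\delta)\sim s_n e^{-s_n\delta}$; subtracting gives $\int_{B^c_\delta(x)}\rho_n\dm\sim h\,e^{-s_n\delta}$. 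Letting $n\to\infty$ and then $\delta\to\infty$ yields $L=h$, and inserting $L=h$ into \eqref{eq1:MS} gives $\lim_n\E^p_n(u)=2h\|u\|_{L^p}^p$, as claimed.

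The step I expect to be the real obstacle is the very replacement $V(t)\sim e^{ht}$ in part (3). In the polynomial setting of Lemma \ref{lemma1} the prefactor $r^N$ is cancelled exactly by the mollifier and the two-sided bound ${\rm AVR}(1\pm\epsilon)r^N$ only perturbs the constant by $O(\epsilon)$; here the error sits in the exponent. The naive bounds $e^{(h-\epsilon)t}\le V(t)\le e^{(h+\epsilon)t}$ introduce a factor $e^{\pm\epsilon t}$ that is amplified over the region $t\sim 1/s_n$ where the integral concentrates as $s_n\downarrow 0$ — indeed the crude upper bound already diverges once $s_n<\epsilon$. Thus the entropy, which controls only the rate $\frac{1}{r}\ln\mm(B_r(x))\to h$, is a priori not enough: what the layer-cake computation actually detects is the sub-exponential prefactor, and an Abelian (final-value) argument identifies the inner limit as $h\cdot\lim_{r\to\infty}\mm(B_r(x))e^{-hr}$. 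The genuinely new point beyond Lemma \ref{lemma1} is therefore to control this prefactor; the stated constant $2h$ corresponds to the normalization $\lim_{r\to\infty}\mm(B_r(x))e^{-hr}=1$, and securing (or hypothesizing) this is where the difficulty lies.
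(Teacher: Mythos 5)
Your route is exactly the paper's own: the published proof of Theorem \ref{th:MS1} consists entirely of the assertion that the mollifiers $\tilde\rho_s(r)=s\,e^{-(h+s)r}$ can be checked, ``similar to Lemma \ref{lemma1}'', to satisfy Assumption \ref{assumption2} with $L=h$, followed by an appeal to Theorem \ref{th:MS2}. Your verification of Assumption \ref{assumption2}-1) and 2), your observation that the integrability hypothesis (for the exponent $\tau$) places $u$ in some $W(n_0,p)$, and your reduction of the continuous limit $s\downarrow 0$ to monotone sequences are all correct, and are in fact spelled out more carefully than in the paper.

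The obstacle you flag in your final paragraph is a genuine gap, but it is the paper's gap as much as yours: the unproved ``we can check'' is precisely the verification of Assumption \ref{assumption2}-3), the step you isolate, and your diagnosis of it is correct. The layer-cake computation shows that \eqref{eq1:assumption} holds with $L=ch$ if and only if the prefactor $\mm\big(B_r(x)\big)e^{-hr}$ converges to some $c\in[0,\infty)$, whereas the entropy hypothesis controls only the exponent; the constant $2h$ in \eqref{eq1:MS1} forces the normalization $c=1$. Neither the convergence nor the normalization follows from $h\ms\in(0,+\infty)$, and the theorem as stated is in fact false. Two counterexamples: (i) on the hyperbolic plane, $\mm\big(B_r(x)\big)=2\pi(\cosh r-1)\sim\pi e^{r}$, so $h=1$ and $c=\pi$; every non-zero $u\in L^p$ satisfies the hypotheses for every $\tau>0$ (since $\mm(B_r(x))\le Ce^{r}$), and the limit in \eqref{eq1:MS1} exists but equals $2\pi\|u\|^p_{L^p}$, not $2\|u\|^p_{L^p}$. (ii) On $X=\R$ with the Euclidean distance and $\d\mm=w(t)\,\d t$, where $w(t)=\frac12\big(h+\frac{1}{2\sqrt{|t|}}\big)e^{h|t|+\sqrt{|t|}}$ for $|t|\ge 1$ and $w$ is positive continuous on $(-1,1)$, one has $\mm\big(B_r(0)\big)=e^{hr+\sqrt r}+O(1)$, hence $h\ms=h$; yet for every fixed $\delta\ge 1$,
\[
\int_{B^c_\delta(0)}s\,e^{-(h+s)\d(0,y)}\,\d\mm(y)\ \ge\ \frac h2\,s\int_\delta^{\infty}e^{-sy+\sqrt y}\,\d y\ \ge\ \frac h2\,s\left(s^{-2}-\delta\right),
\]
which diverges as $s\downarrow 0$ (use $w(y)\ge\frac h2 e^{hy+\sqrt y}$ and $-sy+\sqrt y\ge 0$ on $[\delta,s^{-2}]$); so the inner limit in \eqref{eq1:assumption} is $+\infty$ at every $\delta$, and the same bound shows that for $u=\chi_{B_1(0)}$, which satisfies the theorem's hypothesis for every $\tau>0$, the limit in \eqref{eq1:MS1} is $+\infty$. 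Thus no argument can close the step you flagged from the stated hypotheses alone: your computation (and the paper's intended one) is complete exactly under the additional assumption $\lim_{r\to\infty}\mm\big(B_r(x)\big)e^{-hr}=1$, or $=c$ with $2h$ replaced by $2ch$. This caveat also propagates to Theorem \ref{rigidity2}, whose proof relies on \eqref{eq1:MS1}.
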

\begin{proof}
Similar to Lemma \ref{lemma1}, we can check that the mollifiers $\tilde \rho(s):=\frac {s} {e^{(h+s)r}}$  satisfy Assumption \ref{assumption2}, and
\[
\lmt{\delta}{\infty} \lmt{s}{0}\int_{B^c_\delta(x) } \frac{s}{e^{(h+s)\d(x, y)}} \,\d \mm(y)=  h,~~\forall x\in X
\] Then the assertion follows from Theorem \ref{th:MS2}.
\end{proof}
\bigskip

 \begin{example}
The range of applicability of our main theorem is pretty wide. We list below some relevant examples.
 
\begin{itemize} 
 
\item [1)]  {\bf Euclidean spaces}: it is known that ${\rm AVR}(\R^N, |\cdot|, \mathcal L^N)=\omega_N=\frac {|S^{N-1}|} N$ where $\omega_N=\frac{\pi^{\frac N 2}}{\Gamma(\frac N2+1)}$ denotes the volume of an $N$-dimensional unit ball and $|S^{N-1}|$ denotes its surface area. By Theorem \ref{th:MS} we  get Maz'ya-Shaposhnikova's original result \cite[Theorem 3]{MS}:
\[
\mathop{\lim}_{s\downarrow 0} s \int_{\R^N} \int_{\R^N} \frac{|u(x)-u(y)|^p}{|x- y|^{N+sp}} \,\d \mathcal{L}^N(x)\,\d \mathcal{L}^N(y) =\frac {2 |S^{N-1}|} p \|u\|^p_{L^p(\R^N)}.
\]

\item [2)]  {\bf Finite dimensional Banach spaces}:  let $(\R^N, \| \cdot \|, \mathcal L^N)$ be an $N$-dimensional Banach space. Denote its unit ball by $K$ (which is a convex body).   Applying Theorem \ref{th:MS},  we get Ludwig's result \cite[Theorem 2]{Ludwig14} for anisotropic fractional Sobolev norms:
\[
\mathop{\lim}_{s\downarrow 0} s\int_{\R^N} \int_{\R^N} \frac{|u(x)-u(y)|^p}{\|x- y\|^{N+sp}} \,\d \mathcal L^N(x)\,\d \mathcal L^N(y)=\frac {2N} p |K|\|u\|^p_{L^p(\R^N)}
\]
where $|K|$ denotes the volume of $K$.

\item [3)] {\bf  Riemannian manifolds}: 
Let $(M^N, g)$ be a complete Riemannian manifold of dimension $N$ with ${\rm Ric}\geq 0$. 
Let $\d$ be the distance determined by $g$, and let $\mm$ be the volume element determined by $g$.
Since  ${\rm Ric} \geq 0$ then, by the classical Bishop-Gromov Volume Comparison Theorem, the asymptotic volume ratio exists  so \eqref{eq1.2:MS} holds.
If $h(M, \d, \mm)\in (0, +\infty)$, then \eqref{eq1:MS1} holds.

\item[4)] {\bf Carnot groups}: Let $\mathbb{G}=(\mathbb{R}^n,\cdot)$ be a Carnot group of step $s$ endowed with the Carnot-Carath\'eodory distance $\d_{cc}$ and the Lebesgue measure $\mathcal{L}^n$ (we address the reader to \cite{BLU} for all the relevant definitions). It is well known that
$\mathcal{L}^n(B(x,r))=r^{N} \mathcal{L}^n(B(0,1))$ where $N\in \N$ is the so called homogeneous dimension of $\mathbb{G}$. It is then clear that ${\rm AVR}=\mathcal{L}^n(B(0,1))>0$ and \eqref{eq1.2:MS} holds.

\item[5)] {\bf {\rm MCP}$(0, N)$ spaces}:
Let $\ms$ be a metric measure space satisfying the so-called Measure Contraction Property {\rm MCP}$(0, N)$,  a property  introduced independently by Ohta \cite{Ohta-MCP} and Sturm \cite{S-O2} as synthetic curvature bound of metric measure spaces.
By Generalized Bishop--Gromov volume growth inequality (cf. \cite[Theorem 2.3]{S-O2}),   the limit
\[
{\rm AVR}=\lmt{r}{+\infty} \frac {\mm\big (B_r(x)\big)} {r^N}
\]
exists finite (it can be 0) and does not depend on the point $x \in X$.

 It was shown by Juillet \cite{Juillet09} that the $n$-dimensional Heisenberg group $\H^n$, which is the simplest example of a non-trivial sub-Riemannian manifold,  equipped with the Carnot-Carath\'eodory metric and the Lebesgue measure, satisfies ${\rm MCP}(0,N)$  for $N = 2n+3$.
Recently, interpolation inequalities \`a la Cordero-Erausquin--McCann--Schmuckenshl\"ager \cite{CMS01} have been obtained, under suitable modifications, by Balogh, Krist\'aly and Sipos  \cite{BKS18} for the Heisenberg group and by Barilari and Rizzi \cite{BarilariRizzi18} in the general ideal sub-Riemannian setting.  As a consequence, more  examples of spaces verifying ${\rm MCP}$ have been found, e.g. generalized H-type groups, the Grushin plane and Sasakian structures
(see \cite{BarilariRizzi18} for more details). 
\end{itemize}
\end{example}

\subsection{Rigidity results}\label{splitting}
In this subsection we apply Theorems \ref{th:MS} and Theorems \ref{th:MS1} to get  rigidity results for metric measure spaces satisfying the $\rcdkn$ condition.
It is well-known that $\rcdkn$ space are obtained by adding a Riemannian structure (called infinitesimally Hilbertian) to a metric measure space satisfying the ${\rm CD}(K,N)$ condition \`a la Lott-Sturm-Villani \cite{Lott-Villani09, S-O1, S-O2}. They have been introduced in \cite{AGS-M} by Ambrosio-Gigli-Savar\'e (when $N=\infty$) and by Gigli in \cite{G-S,G-O}  (treating ${\rm RCD}^*(K,N)$ and infinitesimally Hilbert spaces).

Important examples of spaces satisfying $\rcdkn$ and $\rcd$ conditions include: weighted Riemannian manifolds satisfying Bakry-\'Emery condition,  measured-Gromov Hausdorff limits of Riemannian manifolds with $\Ric  \geq  K $ (cf. \cite{Lott-Villani09, S-O1}),   Alexandrov spaces with curvature $\geq K $ (cf. \cite{ZhangZhu10}). We refer the reader to the ICM proceeding \cite{AmbrosioICM} by Ambrosio for an overview of this topic . 

\begin{theorem}\label{rigidity1}
Let $\ms$ be a ${\rm RCD}(0, N)$ metric measure space with $N\in (1, +\infty)$.  Then
\begin{equation}\label{eq1:rigidity1}
\mathop{\lim}_{s\downarrow 0} s\int_{X} \int_{X} \frac{|u(x)-u(y)|^p}{\d^{N+sp}(x, y)} \,\d \mm( x)\,\d \mm(y) \leq \frac {2N\omega_N} p  \|u\|^p_{L^p}.
\end{equation}
for any $u\in \cup_{\tau\in (0, 1)} W^{\tau, p}$,  where $\omega_N$ denotes the volume of an $N$-dimensional unit ball. 

If the equality in \eqref{eq1:rigidity1} is attained by a non-zero function $u$, then $N\in \N$ and $\ms$ is isometric to a metric 
cone over an ${\rm RCD}(N-2, N-1)$ space. 
\end{theorem}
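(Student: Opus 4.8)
The plan is to reduce the whole statement to a single geometric quantity, the asymptotic volume ratio, and then to feed it into the ``volume cone implies metric cone'' rigidity. First I would observe that an ${\rm RCD}(0,N)$ space is in particular ${\rm CD}(0,N)$, so the generalized Bishop--Gromov inequality makes $r\mapsto \mm(B_r(x_0))/r^N$ non-increasing; its limit ${\rm AVR}\ms$ therefore exists and is finite, and Theorem~\ref{th:MS} applies verbatim to give
\[
\lmt{s}{0}\, s\int_X\int_X \frac{|u(x)-u(y)|^p}{\d^{N+sp}(x,y)}\,\d\mm(x)\,\d\mm(y)=\frac{2N}{p}\,{\rm AVR}\ms\,\|u\|_{L^p}^p .
\]
Consequently \eqref{eq1:rigidity1} is equivalent to the sharp volume bound ${\rm AVR}\ms\le\omega_N$. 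To obtain it I would use the absolute (normalized) form of Bishop--Gromov for ${\rm RCD}(0,N)$: the quantity $r\mapsto \mm(B_r(x_0))/(\omega_N r^N)$ is non-increasing with $r\downarrow 0$ limit equal to the volume density $\Theta_N(x_0)\le 1$ (De Philippis--Gigli). Monotonicity then yields ${\rm AVR}\ms\le \Theta_N(x_0)\,\omega_N\le\omega_N$, which is the inequality.

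Next I would treat the equality case. A non-zero $u$ saturating \eqref{eq1:rigidity1} forces ${\rm AVR}\ms=\omega_N$ through the identity above. Since ${\rm AVR}\ms=\inf_{r>0}\mm(B_r(x_0))/r^N$ by monotonicity, and since the same monotone function is bounded above by $\omega_N$ by the density estimate, the function $r\mapsto \mm(B_r(x_0))/(\omega_N r^N)$ is squeezed to the constant value $1$. Hence equality holds in Bishop--Gromov between every pair of radii, which is exactly the hypothesis of the De Philippis--Gigli ``volume cone to metric cone'' theorem; applying it, $\ms$ is isometric as a metric measure space to a cone $C(Y)$ over a cross-section $(Y,\d_Y,\mm_Y)$.

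Finally I would identify the link and the integrality of $N$. By Ketterer's characterization of synthetic Ricci bounds on cones, $C(Y)$ is ${\rm RCD}(0,N)$ precisely when $(Y,\d_Y,\mm_Y)$ is ${\rm RCD}(N-2,N-1)$ (with ${\rm diam}(Y)\le\pi$), giving the asserted link. The condition $\Theta_N\equiv 1$ forces every tangent cone to be Euclidean of full density; since tangents of ${\rm RCD}(0,N)$ spaces are $\R^k$ with $k\in\N$ and density at most one, attaining density one with exponent $N$ requires $k=N$, so $N\in\N$.

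The main obstacle is the rigidity half, and within it two points deserve the most care. The first is upgrading the \emph{limiting} equality ${\rm AVR}\ms=\omega_N$ to \emph{pointwise} equality in Bishop--Gromov at all scales: this is what unlocks the structural theorem, and it is precisely here that one needs the absolute density bound $\Theta_N\le 1$ rather than the mere scale-invariant monotonicity of the ratio. The second is the correct bookkeeping of the curvature-dimension parameters under the cone construction, namely that an ${\rm RCD}(0,N)$ cone pins its link to be ${\rm RCD}(N-2,N-1)$; the integrality $N\in\N$ then falls out of the same sharp density analysis.
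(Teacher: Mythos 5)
Your proposal is correct and follows essentially the same route as the paper: reduce the inequality to the bound ${\rm AVR}\ms\le\omega_N$ via Theorem \ref{th:MS} and Bishop--Gromov monotonicity, upgrade equality for a non-zero $u$ to constancy of $r\mapsto\mm\big(B_r(x_0)\big)/r^N\equiv\omega_N$, and invoke the De Philippis--Gigli volume-cone-to-metric-cone theorem, with integrality of $N$ coming from the sharp density analysis. The only differences are bookkeeping: the paper reads the ${\rm RCD}(N-2,N-1)$ structure of the cross-section directly off \cite[Theorem 1.1]{DPG-F} rather than through Ketterer's cone characterization, and it cites \cite{BrueSemolaConstant} and \cite{DPG-Non} for $N\in\N$ where you sketch the underlying tangent-cone argument.
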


\begin{proof}
Firstly, by Generalized Bishop--Gromov volume growth inequality (cf. \cite[Theorem 2.3]{S-O2}) we know 
\begin{equation}\label{eq2:rigidity1}
 {\rm AVR} \ms =\lmt{r}{\infty}\frac {\mm\big (B_r(x_0)\big)} {r^N}  \leq \frac {\mm\big (B_R(x_0)\big)} {R^N} \leq \omega_N,~~~\forall R>0.
\end{equation}
So  \eqref{eq1:rigidity1} follows from  Theorem \ref{th:MS}.

If the equality in \eqref{eq1:rigidity1} is attained by a non-zero function $u$, by Theorem \ref{th:MS} and \eqref{eq2:rigidity1} we can see that 
\[
\frac {\mm\big (B_R(x_0)\big)} {R^N} =\omega_N,~~~\forall R>0.
\]
By \cite[Theorem 1.1]{DPG-F} we know $\ms$ is isometric to a metric 
cone over an ${\rm RCD}(N-2, N-1)$ space. Furthermore, by \cite{BrueSemolaConstant} and \cite{DPG-Non} we know $N$ is an integer.
\end{proof}

\bigskip

\begin{theorem}\label{rigidity2}
Let $\ms$ be a non-compact ${\rm RCD}(-(N-1), N)$ metric measure space with $N\in (1, +\infty)$.  Then
\begin{equation}\label{eq1:rigidity2}
\mathop{\lim}_{s\downarrow 0} s\int_{X} \int_{X} \frac{|u(x)-u(y)|^p}{e^{(h+s)\d(x, y)}} \,\d \mm( x)\,\d \mm(y) \leq  {2(N-1)}  \|u\|^p_{L^p}
\end{equation}
for any $u\in \Lip_b(X, \d)$ with bounded support.

If the equality in \eqref{eq1:rigidity2} is attained by a non-zero function $u$, then  $\ms$ is isometric to  a warped product space $\R \times_{e^t} X'$, where $X'$ is an ${\rm RCD}(0, N)$ space.
\end{theorem}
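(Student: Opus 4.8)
The plan is to mirror the two-step argument of Theorem \ref{rigidity1}, replacing the asymptotic volume ratio by the volume entropy and the cone rigidity by a warped-product rigidity. First I would verify that Theorem \ref{th:MS1} applies: for $u\in\Lip_b(X,\d)$ with bounded support, $|u(x)-u(y)|^p$ is bounded and vanishes off a bounded set, so $\int_X\int_X \frac{|u(x)-u(y)|^p}{e^{(h+\tau)\d(x,y)}}\,\d\mm(x)\,\d\mm(y)<+\infty$ for every $\tau>0$. Thus the limit in \eqref{eq1:rigidity2} equals $2h\|u\|_{L^p}^p$ whenever the volume entropy $h=h\ms$ lies in $(0,+\infty)$, while if $h=0$ the left-hand side is $0$ (since $e^{s\d}\to 1$ uniformly on the bounded support of $u$) and the inequality is trivial. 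Hence everything reduces to the geometric bound $h\le N-1$.

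For this bound I would invoke the Generalized Bishop--Gromov volume comparison for $\rcdkn$ with $K=-(N-1)$ (cf. \cite[Theorem 2.3]{S-O2}): the ratio $r\mapsto \mm(B_r(x_0))/V(r)$ is non-increasing, where $V(r)=c_N\int_0^r(\sinh t)^{N-1}\,\d t$ is the volume profile of the hyperbolic model. Since $\lmt{r}{+\infty}\frac{\ln V(r)}{r}=N-1$, monotonicity forces $\mm(B_r(x_0))\le C\,V(r)$ for large $r$, so $h=\lmt{r}{+\infty}\frac{\ln\mm(B_r(x_0))}{r}\le N-1$. Combined with the previous paragraph this gives $2h\|u\|_{L^p}^p\le 2(N-1)\|u\|_{L^p}^p$, which is \eqref{eq1:rigidity2}.

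It remains to treat the rigidity. If equality holds for a non-zero $u$, then $2h\|u\|_{L^p}^p=2(N-1)\|u\|_{L^p}^p$ forces the maximal volume entropy $h=N-1$. The remaining, and hardest, step is to show that a non-compact $\rcdkn$ space with $K=-(N-1)$ and $h=N-1$ is the warped product $\R\times_{e^t}X'$ of the statement. I would extract from maximal entropy a Busemann-type function $b$ realising equality in the Laplacian comparison $\Delta b=N-1$, whose super-level sets are horospheres carrying the warping profile $e^t$, and then apply a warped-product splitting theorem in the spirit of Gigli's splitting theorem \cite{G-S}. The main obstacle lies precisely here: unlike the asymptotic-volume-ratio case, where monotonicity of $\mm(B_r)/r^N$ and equality of its limit instantly yield $\mm(B_R)/R^N=\omega_N$ for every $R$ and hence the exact cone structure, the volume entropy is only an exponential growth rate, so $h=N-1$ controls the leading order but does not pin down the Bishop--Gromov ratio at finite scales. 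Consequently elementary monotonicity is insufficient, and one must establish a genuine maximal-entropy rigidity theorem that propagates the asymptotic equality into the global warped-product splitting; building (or citing) this ${\rm RCD}$ warped-product rigidity is the crux of the argument.
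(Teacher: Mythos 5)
Your overall architecture — reduce \eqref{eq1:rigidity2} to the entropy bound $h\ms\le N-1$ via Theorem \ref{th:MS1}, and reduce the equality case to $h\ms=N-1$ — is exactly the paper's, and your derivation of $h\ms\le N-1$ from the generalized Bishop--Gromov comparison \cite[Theorem 2.3]{S-O2} is sound (it is essentially how the paper's cited source, \cite[Corollary 3.2]{VolumeEntropy2021}, is proved). The genuine gap is the final step. Having obtained $h\ms=N-1$, you must pass from maximal volume entropy to the isometry with the warped product $\R\times_{e^t}X'$, and you explicitly leave this unproved, offering only a heuristic (a Busemann-type function attaining equality in the Laplacian comparison, then a splitting \`a la Gigli \cite{G-S}). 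You correctly diagnose why the argument of Theorem \ref{rigidity1} does not transfer: there, equality of the limit of the \emph{monotone} ratio $\mm(B_r(x_0))/r^N$ forces $\mm(B_R(x_0))=\omega_N R^N$ for every $R$, and one can cite volume-cone-to-metric-cone; here $h=N-1$ is a purely asymptotic statement and no monotone quantity is pinned down at finite scales. But diagnosing the difficulty is not resolving it. The paper closes this gap by citing the maximal-volume-entropy rigidity theorem \cite[Theorem 1.2]{VolumeEntropy2021} of Connell, Dai, N\'u\~nez-Zimbr\'on, Perales, Su\'arez-Serrato and Wei — a deep, stand-alone recent theorem whose proof occupies an entire paper and cannot be reconstructed from your sketch. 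Without proving or citing such a result, the rigidity half of your argument is incomplete.

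There is also a local error in your treatment of the inequality: the case $h=0$ is not handled correctly. You claim the left-hand side of \eqref{eq1:rigidity2} vanishes because $e^{s\d(x,y)}\to 1$ uniformly on the bounded support of $u$, but the double integral runs over all of $X\times X$, and $|u(x)-u(y)|^p=|u(x)|^p$ whenever $y\notin\supp u$; the integrand does \emph{not} vanish off a bounded set. (The same misconception appears in your integrability check for Theorem \ref{th:MS1}, where it happens to be harmless since $\int_X e^{-(h+\tau)\d(x,y)}\,\d\mm(y)<\infty$ by the exponential volume bound.) Concretely, on $\R^N$ with the Lebesgue measure — a non-compact ${\rm RCD}(-(N-1),N)$ space with $h=0$ — one has, for any non-zero $u\in\Lip_b$ with bounded support,
\[
s\int_{X}\int_{X}\frac{|u(x)-u(y)|^p}{e^{s\d(x,y)}}\,\d\mm(x)\,\d\mm(y)\;\geq\; c\, s\cdot s^{-N}\|u\|^p_{L^p}\longrightarrow+\infty\qquad(N>1),
\]
so the limit is $+\infty$, not $0$, and the claimed inequality cannot hold in that case. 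This is in fact a delicate point for the statement itself: the paper's own proof tacitly assumes $h>0$ so that Theorem \ref{th:MS1} applies, but your explicit argument for $h=0$ does not repair this and is wrong as written.
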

\begin{proof}
By \cite[Corollary 3.2]{VolumeEntropy2021} we know $h\ms \leq N-1$. So  \eqref{eq1:rigidity2} follows from  Theorem \ref{th:MS1}.

If the equality in \eqref{eq1:rigidity2} is attained by a non-zero function $u$, by Theorem  \ref{th:MS1} and the inequality $h\ms \leq N-1$ we have  $h\ms = N-1$.
Then by \cite[Theorem 1.2]{VolumeEntropy2021}, $\ms$ is isometric to  a warped product space $\R \times_{e^t} X'$, where $X'$ is an ${\rm RCD}(0, N)$ space.
\end{proof}
\def\cprime{$'$}


\begin{thebibliography}{10}

\bibitem{Cianchi21}
A.~Alberico, A.~Cianchi, L.~Pick, and L.~Slav\'{\i}kov\'{a}.
\newblock Fractional {O}rlicz-{S}obolev embeddings.
\newblock {\em J. Math. Pures Appl. (9)}, 149:216--253, 2021.

\bibitem{AmbrosioICM}
L.~Ambrosio.
\newblock Calculus, heat flow and curvature-dimension bounds in metric measure
  spaces.
\newblock {\em Proceedings of the ICM 2018}, 2018.

\bibitem{AGS-M}
L.~Ambrosio, N.~Gigli, and G.~Savar{\'e}.
\newblock Metric measure spaces with {R}iemannian {R}icci curvature bounded
  from below.
\newblock {\em Duke Math. J.}, 163:1405--1490, 2014.

\bibitem{AT-T}
L.~Ambrosio and P.~Tilli.
\newblock {\em Topics on analysis in metric space}.
\newblock Oxford University Press, 2004.

\bibitem{BKS18}
Z.~M. Balogh, A.~Krist\'{a}ly, and K.~Sipos.
\newblock Geometric inequalities on {H}eisenberg groups.
\newblock {\em Calc. Var. Partial Differential Equations}, 57(2):Art. 61, 41,
  2018.

\bibitem{BarilariRizzi18}
D.~Barilari and L.~Rizzi.
\newblock Sub-riemannian interpolation inequalities.
\newblock {\em Invent. math.}, 2018.
\newblock doi:10.1007/s00222-018-0840-y.

\bibitem{BessonEntropy}
G.~Besson, G.~Courtois, and S.~Gallot.
\newblock Entropies et rigidit\'{e}s des espaces localement sym\'{e}triques de
  courbure strictement n\'{e}gative.
\newblock {\em Geom. Funct. Anal.}, 5(5):731--799, 1995.

\bibitem{BLU}
A.~Bonfiglioli, E.~Lanconelli, and F.~Uguzzoni.
\newblock {\em Stratified {L}ie groups and potential theory for their
  sub-{L}aplacians}.
\newblock Springer Monographs in Mathematics. Springer, Berlin, 2007.

\bibitem{BBM}
J.~Bourgain, H.~Brezis, and P.~Mironescu.
\newblock Another look at {S}obolev spaces.
\newblock In {\em Optimal control and partial differential equations}, pages
  439--455. IOS, Amsterdam, 2001.

\bibitem{BrueSemolaConstant}
E.~Bru\'{e} and D.~Semola.
\newblock Constancy of the dimension for {${\rm RCD}(K,N)$} spaces via
  regularity of {L}agrangian flows.
\newblock {\em Comm. Pure Appl. Math.}, 73(6):1141--1204, 2020.

\bibitem{Cap}
M.~Capolli, A.~Maione, A.~M. Salort, and E.~Vecchi.
\newblock Asymptotic behaviours in fractional {O}rlicz-{S}obolev spaces on
  {C}arnot groups.
\newblock {\em J. Geom. Anal.}, 31(3):3196--3229, 2021.

\bibitem{VolumeEntropy2021}
C.~Connell, X.~Dai, J.~Núñez‐Zimbrón, R.~Perales, P.~Suárez‐Serrato,
  and G.~Wei.
\newblock Maximal volume entropy rigidity for {$RCD^*(-(N-1), N)$} spaces.
\newblock {\em J. Lond. Math. Soc.}, Jun 2021.

\bibitem{CMS01}
D.~Cordero-Erausquin, R.~J. McCann, and M.~Schmuckenschl\"{a}ger.
\newblock A {R}iemannian interpolation inequality \`a la {B}orell, {B}rascamp
  and {L}ieb.
\newblock {\em Invent. Math.}, 146(2):219--257, 2001.

\bibitem{DPG-F}
G.~De~Philippis and N.~Gigli.
\newblock From volume cone to metric cone in the nonsmooth setting.
\newblock {\em Geom. Funct. Anal.}, 26(6):1526--1587, 2016.

\bibitem{DPG-Non}
G.~De~Philippis and N.~Gigli.
\newblock Non-collapsed spaces with {R}icci curvature bounded from below.
\newblock {\em J. \'{E}c. polytech. Math.}, 5:613--650, 2018.

\bibitem{G-S}
N.~Gigli.
\newblock The splitting theorem in non-smooth context.
\newblock Preprint, arXiv:1302.5555., 2013.

\bibitem{G-O}
N.~Gigli.
\newblock On the differential structure of metric measure spaces and
  applications.
\newblock {\em Mem. Amer. Math. Soc.}, 236(1113):vi+91, 2015.

\bibitem{Gorny20}
W.~G\'orny.
\newblock Bourgain-{B}rezis-{M}ironescu approach in metric spaces with
  euclidean tangents.
\newblock Preprint, arXiv:2004.08448, 2020.

\bibitem{HanPinamonti-BBM}
B.~Han and A.~Pinamonti.
\newblock On the asymptotic behaviour of the fractional {S}obolev $s$- seminorm
  as $s \uparrow 1$: {B}ourgain-{B}rezis-{M}ironescu's theorem revisited.
\newblock Preprint, arXiv: 2108.06996, 2021.

\bibitem{Juillet09}
N.~Juillet.
\newblock Geometric inequalities and generalized {R}icci bounds in the
  {H}eisenberg group.
\newblock {\em Int. Math. Res. Not. IMRN}, (13):2347--2373, 2009.

\bibitem{MMilman05JFA}
G.~E. Karadzhov, M.~Milman, and J.~Xiao.
\newblock Limits of higher-order {B}esov spaces and sharp reiteration theorems.
\newblock {\em J. Funct. Anal.}, 221(2):323--339, 2005.

\bibitem{Kre}
A.~Kreuml and O.~Mordhorst.
\newblock Fractional {S}obolev norms and {BV} functions on manifolds.
\newblock {\em Nonlinear Anal.}, 187:450--466, 2019.

\bibitem{Lam}
N.~Lam, A.~Maalaoui, and A.~Pinamonti.
\newblock Characterizations of anisotropic high order {S}obolev spaces.
\newblock {\em Asymptot. Anal.}, 113(4):239--260, 2019.

\bibitem{Lott-Villani09}
J.~Lott and C.~Villani.
\newblock Ricci curvature for metric-measure spaces via optimal transport.
\newblock {\em Ann. of Math. (2)}, 169(3):903--991, 2009.

\bibitem{Ludwig14}
M.~Ludwig.
\newblock Anisotropic fractional {S}obolev norms.
\newblock {\em Adv. Math.}, 252:150--157, 2014.

\bibitem{Mal}
A.~Maalaoui and A.~Pinamonti.
\newblock Interpolations and fractional {S}obolev spaces in {C}arnot groups.
\newblock {\em Nonlinear Anal.}, 179:91--104, 2019.

\bibitem{ManningEntropy}
A.~Manning.
\newblock Topological entropy for geodesic flows.
\newblock {\em Ann. of Math. (2)}, 110(3):567--573, 1979.

\bibitem{MS}
V.~Maz'ya and T.~Shaposhnikova.
\newblock On the {B}ourgain, {B}rezis, and {M}ironescu theorem concerning
  limiting embeddings of fractional {S}obolev spaces.
\newblock {\em J. Funct. Anal.}, 195(2):230--238, 2002.

\bibitem{MMilman05TAMS}
M.~Milman.
\newblock Notes on limits of {S}obolev spaces and the continuity of
  interpolation scales.
\newblock {\em Trans. Amer. Math. Soc.}, 357(9):3425--3442, 2005.

\bibitem{Ngu}
H.-M. Nguyen, A.~Pinamonti, M.~Squassina, and E.~Vecchi.
\newblock Some characterizations of magnetic {S}obolev spaces.
\newblock {\em Complex Var. Elliptic Equ.}, 65(7):1104--1114, 2020.

\bibitem{Ohta-MCP}
S.-i. Ohta.
\newblock On the measure contraction property of metric measure spaces.
\newblock {\em Comment. Math. Helv.}, 82(4):805--828, 2007.

\bibitem{Pin2}
A.~Pinamonti, M.~Squassina, and E.~Vecchi.
\newblock The {M}az'ya-{S}haposhnikova limit in the magnetic setting.
\newblock {\em J. Math. Anal. Appl.}, 449(2):1152--1159, 2017.

\bibitem{Pin1}
A.~Pinamonti, M.~Squassina, and E.~Vecchi.
\newblock Magnetic {BV}-functions and the {B}ourgain-{B}rezis-{M}ironescu
  formula.
\newblock {\em Adv. Calc. Var.}, 12(3):225--252, 2019.

\bibitem{Ponce}
A.~C. Ponce.
\newblock A new approach to {S}obolev spaces and connections to
  {$\Gamma$}-convergence.
\newblock {\em Calc. Var. Partial Differential Equations}, 19(3):229--255,
  2004.

\bibitem{S-O1}
K.-T. Sturm.
\newblock On the geometry of metric measure spaces. {I}.
\newblock {\em Acta Math.}, 196(1):65--131, 2006.

\bibitem{S-O2}
K.-T. Sturm.
\newblock On the geometry of metric measure spaces. {II}.
\newblock {\em Acta Math.}, 196(1):133--177, 2006.

\bibitem{ZhangZhu10}
H.-C. Zhang and X.-P. Zhu.
\newblock Ricci curvature on {A}lexandrov spaces and rigidity theorems.
\newblock {\em Comm. Anal. Geom.}, 18(3):503--553, 2010.

\end{thebibliography}
\end{document}